\newtheorem{theorem}{Theorem}[section]
\newtheorem{lemma}[theorem]{Lemma}
\def\A{{\mathcal A}}
\def\R{{\mathbb R}}
\def\S{{\mathcal S}}
\newlength{\figurewidth}
\newlength{\figurewidththree}
\begin{document}

\title[Numerical study elliptic equation with fractional BC]
{A numerical study of the homogeneous elliptic equation with fractional order boundary conditions}

\author{Raytcho Lazarov \and Petr Vabishchevich}
\address{R. Lazarov, Department of Mathematics, Texas A\&M University, College Station, TX 778 \and
Institute of Mathematics and Informatics, Bulgarian Academy of Sciences, ul.~Acad.~G.Bonchev, bl. 8, Sofia, Bulgaria 
{(lazarov@math.tamu.edu)}}
\address{P. Vabishchevich, Nuclear Safety Institute of RAS, 52, B. Tulskaya, Moscow, Russia
\and Peoples' Friendship University of Russia (PRUDN University)\\
6, Miklukho-Maklaya Str., Moscow, RUSSIA
{(vabishchevich@gmail.com)}}

 \maketitle
 


 \begin{abstract}
We consider the homogeneous equation $\A u=0$, where $\A$ is a symmetric and coercive elliptic operator
in $H^1(\Omega)$ with $\Omega$ bounded domain in ${{\mathbb R}}^d$.
The boundary conditions involve fractional power $\alpha$, $ 0 < \alpha <1$, of the
Steklov spectral operator arising in Dirichlet to Neumann map.
For such problems we discuss two different numerical methods:
(1) a computational algorithm based on an approximation of the integral representation 
of the fractional power of the operator and
(2) numerical technique involving an auxiliary Cauchy problem for an ultra-parabolic
equation and its subsequent approximation by a time stepping technique.
For both methods we present numerical experiment for a model two-dimensional problem that 
demonstrate the accuracy, efficiency, and stability of the algorithms.
\end{abstract}

\medskip

{\it MSC 2010\/}: Primary 65N30; Secondary 65M12, 65N99
 \smallskip

{\it Key Words and Phrases}: fractional calculus, fractional boundary conditions, 
harmonic functions, numerical methods for fractional powers of elliptic operators, 
ultra-parabolic equations



 \section{Introduction}\label{sec:1}

In the last two decades a number of nonlocal differential operators generated by 
fractional derivatives in time and space have been used to model various applied
problems in applied physics, biology, geology, finance, and engineering, 
e.g. \cite{metzler2014anomalous,podlubny1998fractional}.
Many models 
involve both sub-diffusion (fractional in time) and super-diffusion (fractional in space) differential operators. 
Often, super-diffusion problems are treated as problems with a fractional power of an elliptic operator.
Loosely speaking for $\A: H^1_0(\Omega) \to H^{-1}(\Omega)$ determined by the identity 
$(\A  u, v) = ( \nabla u, \nabla v) $ for $u,v \in H^1_0(\Omega)$ 
its fractional power is defined by
\begin{equation}\label{eq:f-oper}
\A^\alpha u= \sum_{j=1}^\infty \lambda^\alpha_j (u, \psi_j) \psi_j, \quad \mbox{where} \quad 
\A \psi_j =\lambda_j \psi_j
\end{equation}
i.e. $\lambda_j, \psi_j$ are the eigenvalues and normalized eigenfunctions of the 
negative Laplacian with homogeneous Dirichlet boundary conditions.

To discretize the problem $\A^\alpha u= f $ 
we can apply finite volume or finite element methods to get an approximation $\A_h$ of  $\A$
and subsequently to reduce the problem to a discrete form $\A^\alpha_h u_h=f_h$.
A practical implementation of such approach requires the matrix function-vector multiplication
$\A^\alpha_h u_h$ where the matrix $\A_h$ is not known explicitly.
For such problems, different approaches \cite{higham2008functions} are available.
Algorithms for solving systems of linear equations associated with
fractional elliptic equations that are 
based on Krylov subspace methods with the Lanczos approximation
are discussed, e.g., in \cite{ilic2009numerical}.
A comparative analysis of the contour integral method, the extended Krylov subspace method, and the preassigned 
poles and interpolation nodes method for solving
space-fractional reaction-diffusion equations is presented in \cite{burrage2012efficient}.
The simplest variant is associated with the explicit construction of the solution using the 
eigenvalues and eigenfunctions of the elliptic operator with diagonalization of the corresponding matrix
\cite{bueno2012fourier,ilic2006numerical}. 
Unfortunately, all these approaches demonstrates quite high computational complexity for multidimensional problems.
In the special case when there is an efficient method for solving the equation $\A_h w_h=f_h$, an algorithm based 
the best ratioinal approximation of $t^{-\alpha}$ on $(0,1)$ has been proposed and 
experimentally justified in \cite{Harisanov2017}.

One can adopt a general approach to solve numerically equations involving fractional power of operators
by first approximating the original operator and then taking fractional power of its discrete variant. 
Using Dunford-Cauchy formula the elliptic operator is represented as a contour integral  in the complex plane. 
Further applying appropriate quadratures with integration nodes in the complex plane one ends up with 
a proper method that involves only inversion of the original operator. 
The approximate operator is treated as a sum of resolvents \cite{gavrilyuk2004data,gavrilyuk2005data}
ensuring the exponential convergence of quadrature approximations.
Bonito and Pasciak in \cite{bonito2015numerical} presented a more promising variant of using quadrature
formulas with nodes on the real axis, which are constructed
on the basis of the corresponding integral representation for the power operator  \cite{krasnoselskii1976integral}.
In this case, the inverse operator of the problem has an additive representation, where
each term is an  inverse of the original elliptic operator.
A similar rational approximation to the fractional Laplacian operator is studied in \cite{AcetoNovat}.

In \cite{vabishchevich2014numerical} a computational algorithm for solving
an equation with fractional powers of elliptic operators on the basis of
a transition to a pseudo-parabolic equation has been proposed, see equation \eqref{19}.
For the auxiliary Cauchy problem, standard two-level schemes are applied.
The computational algorithm is simple for practical use, robust, and applicable to solving
a wide class of problems.  One needs to theoretical study the stability and the convergence 
of such schemes. The case of smooth data could be studied with the existing 
methods, see, e.g. \cite{Samarskii1989, Thomee2006}, while the case of non-smooth
data needs deeper and more refined analysis. 
The computations  in \cite{vabishchevich2014numerical} show that
usually, a small number of pseudo-time steps is required to get  
a good approximation of the required solution of the discrete fractional equation.
This computational algorithm for solving equations with fractional powers of operators
is promising also when considering transient problems. 

We note that \eqref{eq:f-oper} gives one possible definition of the fractional power
of elliptic differential operators. Another possibility is to define it via Ritz potentials,  for 
$ \Omega ={{\mathbb R}}^d$, 
see, e.g. \cite[Section 2.10, p.~128]{kilbas2006theory}.  The extension of such derivative to bounded domain $\Omega$
has been used in the work of \cite{bonito2015numerical}. 
This quite general definition could be used also for complex values of $\alpha$. 

In the present study, we consider a new problem, where the solution inside a 
domain satisfies the homogeneous elliptic equation of second order with a given 
fractional boundary condition, introduced in the following manner. 
The Dirichlet to Neumann map evaluates the normal derivative of a harmonic
function for given Dirichlet data and defines 
an operator $\S: H^{\frac12}(\Gamma) \to H^{-\frac12}(\Gamma)$ 
on the dense set, for example, $ D(\mathcal{S} ) = \{ u:  \ u( x) \in H^1(\Gamma)\} \subset L^2(\Gamma)$.
The fractional power $0< \alpha <1$ of the operator $\S$ is defined through the 
eigenvalues and the eigenfunctions of the corresponding Steklov spectral problem, 
see, e.g.  \cite{babuska1991eigenvalue-art}. This idea 
is explained in details in Section \ref{sec:problem}.

The main contribution of this paper is construction and testing of two numerical algorithms 
for computing efficiently an approximation of the equation $\S^\alpha u(x) =g(x)$, $x \in \Omega$, for $0< \alpha < 1$.
This is novel class of mathematical problems where the fractional boundary condition  
is formulated on the basis of the Steklov spectral problem. The standard mathematical problems
of finding the solution of homogeneous elliptic equation   
with Dirichlet or Neumann-type boundary conditions are two 
limiting cases, $\alpha = 0$  and $\alpha = 1$, correspondingly. 
 
To solve approximately problems with fractional boundary conditions, we use the standard
space of piece-wise polynomial functions on a quasi-uniform   partition 
of the domain into simplexes, see, e.g. \cite{Thomee2006}. Further, we develop and test 
 two computational algorithms, one based on an approximation of 
  the corresponding operator of fractional power and  second one,  based on the solution of 
  the auxiliary Cauchy problem for the pseudo-parabolic equation. 
 Finally, we present a number of numerical experiments on some model two-dimensional problem
 that demonstrate the efficiency  and the accuracy of the methods on smooth data. The paper is rather a proof
 of a {\it computational concept} than rigorous study of accuracy and the convergence 
 of the proposed methods. We are confident  that the proposed computational approach merits
 rigorous error analysis, especially for non-smooth solutions, and 
 possible extension to more general elliptic operators.

To reduce the complexity of the notation in the 
paper we use the a calligraphic letters for denoting operators in infinite dimensional spaces and usual 
capital letters for their finite dimensional approximations, e.g. $\S$ denotes the Dirichlet to Neumann
map, while $S$ denotes its finite element approximation.

\section{Problem formulation} \label{sec:problem}

In a bounded domain $\Omega \subset \R^d$, $d=1,2,3$ with the Lipschitz continuous boundary 
$\Gamma \equiv \partial\Omega$, we consider the following operator defined by:
\begin{equation}\label{eq:oper}
(\A  u, v) = a(u,v) := \int_\Omega \left ( k(x) \nabla u \cdot \nabla v + c(x) uv \right ) dx  
\quad \forall u,v \in H^1(\Omega),
\end{equation}
where $k(x) \ge k_0 >0$ and $c(x) \ge 0$ for $x \in \Omega$. We further assume that
$\A$ is coercive in $H^1(\Omega)$ so that $(\A u,u) \ge \delta (u,u)$.
 Then for a given suitably smooth data $ g(x)$,  $ x \in \Gamma$, 
 the problem find $u \in H^1(\Omega)$ such that 
 \begin{equation}\label{eq:weak}
 a(u,v) = \langle g, v \rangle_\Gamma := \int_\Gamma g v ds, \quad \forall v \in H^1(\Omega)
  \end{equation}
has unique solution  $u \in H^1(\Omega)$.  The trace of 
$v$ on $\Gamma$ belongs to the Sobolev space $H^\frac12(\Gamma)$
so  we can defined the operator  $\S: H^{\frac12}(\Gamma) \to H^{-\frac12}(\Gamma)$ 
by the identity $a(u,v)=\langle \S u, v \rangle_\Gamma$ for all $v \in H^1(\Omega)$.
Then on $\Gamma$ the solution of equation \eqref{eq:weak} satisfies 
$\S u=g$. Here the operator $\S$ is the well-known Dirichlet to Neumann map.

Now we introduce the problem we intend to study, namely,  for a given suitably smooth data $g$ 
we seek the solution of the operator equation 
\begin{equation}\label{eq:frac}
\S^\alpha u=g, \quad \mbox{ where } \quad 0 < \alpha <1.
\end{equation}
To define
the fractional power $\S^\alpha$ we first introduce the Steklov type eigenvalue problem,
e.g.  \cite{babuska1991eigenvalue-art}: find $\psi_j \in H^1(\Omega)$
and $\lambda_j \in \R$ so that 
\begin{equation}\label{eq:spec}
a(\psi_j,v) = \lambda_j \langle \psi_j, v \rangle_\Gamma \quad \forall  v \in H^1(\Omega).
\end{equation}
It is well known, e.g. \cite{babuska1991eigenvalue-art}, 
 that this spectral problem has full set of eigenfunctions that span the space $H^1(\Omega)$ so
that we can define the fractional powers of $\S$ in the same manner as for general symmetric elliptic operators,
namely,
$$
\S^\alpha u = \sum_{j=1}^\infty \lambda^\alpha_j  \langle \psi_j, u \rangle_\Gamma \, \psi_j.
$$
The operator $\S$,  defined on the domain
\[
 D(\S ) = \{ u:  \ u( x) \in L_2(\Gamma), \ \sum_{j=0}^{\infty} \lambda_j   \langle u, \psi_j \rangle_\Gamma^2 < \infty \} ,
\]
is self-adjoint and coercive in $L^2(\Gamma)$
\begin{equation}\label{SS6}
  \mathcal{S}  = \mathcal{S} ^* \geq \delta I.
  \quad \delta > 0 ,    
\end{equation} 
Here $I$ is the identity operator in $L^2(\Gamma)$.
For $\delta$, we have $\delta = \lambda_1$.
In applications, the value of $\lambda_1$ is unknown.  However, one can 
find a reliable positive bound from below.

\section{Finite element approximation} 

We consider a standard quasi-uniform triangulation of the domain $\Omega$ 
into triangles (or tetrahedra in 3-D). Let $ x_i, \ i = 1,2, ..., N_h$ be 
vertexes of this triangulation. We introduce the finite dimensional space $V_h \subset H^1(\Omega)$
of continuous functions that are liner over each finite element, see, e.g. \cite{Thomee2006}. 
As a nodal basis we take the standard ``hat" function $\chi_i(x) \in V_h, \ i = 1,2, ..., N_h$.
Then for $v \in V_h$, we have the representation
\[
 v(x) = \sum_{i=i}^{N_h} v_i \chi_i( x) := \sum_{i=i}^{N_h} v(x_i) \chi_i( x).
\] 

Then the corresponding approximations of equation (\ref{eq:weak}) is: find $y \in V_h$ such that 
\begin{equation}\label{8}
 \langle S y, v \rangle = \langle g, v \rangle \quad \mbox{where} \quad  \langle S y,v \rangle_\Gamma = a(y,v) 
 \quad \forall \ v \in V_h. 
 \end{equation} 
 Here $g$ is the given boundary data, see \eqref{eq:weak}. 
Similarly, the approximation of the spectral problem \eqref{eq:spec} is 
 \[
 S \widetilde{\psi}_j = \widetilde{\lambda}_j \widetilde{\psi}_j.
\] 
The eigenpairs $( \widetilde{\lambda}_j , \widetilde{\psi}_j)$, $j=1, \dots, N_\Gamma$, 
have the following properties (see, e.g., \cite{armentano2004effect})
\[
\widetilde{\lambda}_1 \leq \widetilde{\lambda}_2 \leq ... \leq  \widetilde{\lambda}_{N_\Gamma},
\quad \| \widetilde{\psi}_j\| = 1,
\quad j = 1,2, ..., N_\Gamma , 
\]
with $N_\Gamma$  being the number of vertexes on the boundary $\Gamma$.

The operator $S$ acts on a finite dimensional sub-space of $V_h$ 
and, similarly to inequality (\ref{SS6}), we have
\begin{equation}\label{9}
S = S^* \geq \delta I ,
\quad \delta > 0 , 
\end{equation} 
where $\delta \leq \lambda_1 \leq \widetilde{\lambda}_1$. 
The fractional power of the operator $S$ is defined by
\[
 S^{\alpha } y = \sum_{j=1}^{N_h}  \langle y, \widetilde{\psi}_j \rangle_\Gamma  \widetilde{\lambda}_j^{\alpha } 
 \ \widetilde{\psi}_j 
\] 
and  the corresponding finite element approximation of equation (\ref{eq:frac})  is
\begin{equation}\label{10}
 S^\alpha y = g .
\end{equation} 
In fact, since 
 the identity \eqref{8} is over $v \in V_h$, instead of $g$ here we should have the orthogonal $L^2$-projection
 of $g$ onto the trace of $V_h$ on $\Gamma$.  Using the same letter for the 
 original data and for its projection on the finite element space leads to some ambiguity,
 but it simplifies the notations and we hope it does not lead to confusion.
 In  view of (\ref{9}), for the solution (\ref{10}) we get the following trivial a priori estimate:
\begin{equation}\label{11}
 \|y\|_\Gamma \leq \delta^{-\alpha} \|g\|_\Gamma.
\end{equation} 

\section{ Method I. Approximation of the fractional power of a symmetric positive operator using integral representation}
%
Here we construct a numerical algorithm for solving  \eqref{9} that uses
an approximation for $S^\alpha$ using its integral representation
(see, e.g., \cite{krasnoselskii1976integral}):
\begin{equation}\label{13}
 S^{-\alpha} = \frac{\sin(\pi \alpha)}{\pi} \int_{0}^{\infty} \theta^{-\alpha} (S + \theta  I)^{-1} d \theta ,
 \quad 0 < \alpha < 1 . 
\end{equation} 
The approximation of $S^{-\alpha}$ is based on the use of one or another quadrature formulas for
the right-hand side of (\ref{13}). Various possibilities in this 
direction are discussed in \cite{bonito2015numerical}.
One possibility is the special quadrature Gauss-Jacobi formulas used in \cite{AcetoNovat}. 
Here in this algorithm  we apply an exponentially convergent quadrature formula 
considered and studied in \cite{bonito2015numerical}.

In  (\ref{13}) we introduce a new variable $s$, 
$\theta = e^{-2 s}$, so that 
\begin{equation}\label{14}
 S^{-\alpha} = 2 \frac{\sin(\pi \alpha)}{\pi} \int_{-\infty}^{\infty} e^{2\alpha s} (I + e^{2 s} S)^{-1} d s ,
 \quad 0 < \alpha < 1 . 
\end{equation}
Obviously, the main task here is to select a good approximation and fast evaluation the right-hand side of (\ref{14}).

Following \cite{bonito2015numerical},
we apply 
the quadrature formula of rectangles
with nodes $s_m = m \eta, \ m = - M, -M+1, ..., M$ for $\eta = M^{-1/2}$  to  get
the following approximation of equation \eqref{10}
\begin{equation}\label{12}
 D_M y_M = 
 g,   \qquad  y_K \approx  y,   \qquad  S^{-\alpha} \approx D_{M}^{-1} ,
\end{equation}
where 
\begin{equation*}\label{16}
 D_{M}^{-1} = 2 \eta \frac{\sin(\pi \alpha)}{\pi} \sum_{m = -M}^{M} e^{2\alpha s_m} (I + e^{2 s_m} S)^{-1} .
\end{equation*} 
This could be rewritten as 
\begin{equation}\label{17}
 D_{M}^{-1} = \sum_{m = -M}^{M} S_{m}^{-1},
 \quad  S_{m} = S_{m}^{*} > 0 ,
\end{equation}
where for the individual terms of the operator, we have 
$ 
 S_{m} = a_m S + b_m I, $
 $ a_m  > 0,  $
 $ b_m > 0, $
 $  m = - M, -M+1, ..., M .
$ 
In view of this, from (\ref{12}), it follows that
\begin{equation}\label{18}
 y_M = \sum_{m = -M}^{M} S_{m}^{-1} g . 
\end{equation}
The approximate solution is determined as the solution of $2M + 1$ standard problems
with operators $S + c_m I, \ c_m > 0, \ m = - M, -M+1, ..., M$.

The numerical method involves
solving a number of elliptic problems with
Neumann boundary conditions. Indeed, from (\ref{18}), we have 
\[
 y_M = \sum_{m = -M}^{M} y_m,  
 \quad \mbox{where} \quad
 S_{m} y_m = g.
\] 
In view of the above notation, we have
$
 a_m  \langle S y_m, v \rangle_\Gamma + b_m  \langle y_m, v \rangle_\Gamma =  \langle g, v \rangle_\Gamma,
 $ i.e. for each $m$ we solve the following standard system: find $y_m \in V_h$ s.t. 
\[
 a_m a(y_m, v) + b_m  \langle y_m, v \rangle_\Gamma =  \langle g, v\rangle_\Gamma, \ \ \forall v \in V_h.
 \]

\section{ Method II. Approximation of the fractional power of a 
symmetric positive operator using pseudo-parabolic problem} 

Now we present a second algorithm for solving approximately problem \eqref{10}
based on its equivalence to to solution of an auxiliary pseudo-time  
evolutionary 
problem \cite{vabishchevich2014numerical}.
Let $w(x,t)$ be a function defined on $\Gamma \times [0,1]$ such that $w(t) \in L^2(\Gamma)$ for 
any $t \in [0,1]$ and
\[
 w(t) = \delta^{\alpha } (t D + \delta I)^{-\alpha} w(0)  \quad \mbox{with} \quad  D = S - \delta I.
\]
Due to \eqref{9}, we have 
$ 
 D = D^* > 0 .
$ 
By this construction
$ 
 w(1) =  \delta^{\alpha} S^{-\alpha} w(0)
$ 
and comparing it with the solution of equation \eqref{10} we see that  
if we take $w(0)= \delta^{-\alpha} g$ then
$w(1) = S^{-\alpha} g = y$, i.e. this is the solution of \eqref{10}. It is also easy to see that 
 $w(t)$ satisfies the following  pseudo-parabolic initial value problem
\begin{equation}\label{19}
  (t D + \delta I) \frac{d w}{d t} + \alpha D w = 0 ,
  \quad 0 < t \leq 1, \quad  w(0) = \delta^{-\alpha} g.
  \end{equation}  
Therefore, the solution of equation (\ref{10}) coincides with the solution of the Cauchy problem 
(\ref{19}) at pseudo-time moment $t=1$.

We can obtain various a priori estimates for 
(\ref{19}). The estimate  similar to (\ref{11}) has the form
\begin{equation}\label{22}
  \|w(t)\|  \leq \|w(0)\|  = \delta^{-1} \|g\|, \quad 
  \mbox{where} \quad \|w(t)\| =\left (\int_\Gamma |w(t)|^2ds
  \right)^\frac12 .
\end{equation}

To solve numerically the problem (\ref{19}),
we apply implicit two-level scheme, see, e.g. \cite{Samarskii1989}.
Let $\tau$ be the step-size of a uniform grid in time such that 
$w^n = w(t^n), \ t^n = n \tau$, $n = 0,1, ..., N, \ N\tau = 1$.
We approximate equation (\ref{19}) by the following implicit two-level scheme
\begin{equation}\label{23}
 (t_\sigma^n D + \delta I) \frac{ w^{n+1} - w^{n}}{\tau }
 + \alpha D w_\sigma^n = 0,  \quad n = 0,1, ..., N-1,
\end{equation}
\begin{equation}\label{24}
 w^0 = \delta^{-\alpha} g ,
\end{equation} 
where $ 0 < \sigma \le 1$ is a parameter  and
\[
  t^n_{\sigma}= \sigma t^{n+1} + (1-\sigma) t^{n},
  \quad w_\sigma^{n} = \sigma w^{n+1} + (1-\sigma) w^{n}.
\]

The stability of the scheme is established in the  following Lemma:
\begin{lemma} \label{lem:stab}
For $\sigma \geq 0.5$, the difference scheme (\ref{23}), (\ref{24}) 
is unconditionally stable with respect to the initial data and the approximate solution satisfies the 
a priori estimate 
\begin{equation}\label{25}
  \|w^{n+1}\|_\Gamma  \leq \|w^0\|_\Gamma  , 
  \quad n = 0,1, ..., N-1.
\end{equation} 
\end{lemma}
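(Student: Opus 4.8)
The plan is to use the standard energy method for two-level operator-difference schemes, testing the scheme against a suitable combination of $w^{n+1}$ and $w^n$ in the $L^2(\Gamma)$ inner product $\langle\cdot,\cdot\rangle_\Gamma$. First I would rewrite \eqref{23} in canonical form. Multiplying through by $\tau$ and collecting the $w^{n+1}$ and $w^n$ terms, the scheme becomes
\[
 \Bigl( t^n_\sigma D + \delta I + \sigma\tau\alpha D \Bigr) w^{n+1}
 = \Bigl( t^n_\sigma D + \delta I - (1-\sigma)\tau\alpha D \Bigr) w^{n}.
\]
Denote the operator on the left by $B^n$ and the one on the right by $C^n$; both are self-adjoint since $D=D^*>0$ and $I$ is self-adjoint, and note the key algebraic identity $B^n - C^n = \tau\alpha D$, while $B^n + C^n = 2\bigl(t^n_\sigma D + \delta I\bigr) + (2\sigma-1)\tau\alpha D$. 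The point of the weighted evaluation point $t^n_\sigma$ and the weighted average $w^n_\sigma$ is precisely that the "flux" coefficient and the "mass" coefficient are evaluated consistently at the same pseudo-time, which is what makes these two identities clean.

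Next I would form the energy balance. Taking the inner product of $B^n w^{n+1} = C^n w^n$ with $w^{n+1}+w^n$ and using self-adjointness of $B^n, C^n$ gives
\[
 \langle B^n w^{n+1}, w^{n+1}\rangle_\Gamma - \langle C^n w^n, w^n\rangle_\Gamma
 = \langle (C^n - B^n) w^n, w^{n+1}\rangle_\Gamma + \langle B^n w^{n+1} - C^n w^n, w^n + w^{n+1}\rangle_\Gamma,
\]
which after substituting $B^n - C^n = \tau\alpha D$ and the scheme identity collapses; more directly, testing the scheme against $w^n_\sigma = \sigma w^{n+1}+(1-\sigma)w^n$ yields
\[
 \frac{1}{\tau}\bigl\langle (t^n_\sigma D + \delta I)(w^{n+1}-w^n),\, w^n_\sigma \bigr\rangle_\Gamma
 + \alpha \langle D w^n_\sigma, w^n_\sigma\rangle_\Gamma = 0.
\]
The second term is nonnegative because $D>0$ and $\alpha>0$. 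For the first term I would use the algebraic fact that for any self-adjoint positive $G$ and the choice $\sigma\ge \tfrac12$ one has
\[
 \bigl\langle G(w^{n+1}-w^n),\, \sigma w^{n+1}+(1-\sigma)w^n\bigr\rangle_\Gamma
 \;\ge\; \tfrac12\bigl( \langle G w^{n+1}, w^{n+1}\rangle_\Gamma - \langle G w^n, w^n\rangle_\Gamma \bigr),
\]
since the difference of the two sides equals $(\sigma-\tfrac12)\langle G(w^{n+1}-w^n), w^{n+1}-w^n\rangle_\Gamma \ge 0$. Applying this with $G = t^n_\sigma D + \delta I$, which is self-adjoint and positive (indeed $\ge \delta I$, using $t^n_\sigma \ge 0$ and $D>0$), we obtain
\[
 \tfrac{1}{2\tau}\Bigl( \langle G_n w^{n+1}, w^{n+1}\rangle_\Gamma - \langle G_n w^n, w^n\rangle_\Gamma \Bigr) \le 0,
 \qquad G_n := t^n_\sigma D + \delta I.
\]

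The remaining subtlety is that the weight operator $G_n$ changes with $n$ because $t^n_\sigma$ increases in $n$. I would handle this by comparing $\langle G_n w^n, w^n\rangle_\Gamma$ with $\langle G_{n-1} w^n, w^n\rangle_\Gamma$: since $t^n_\sigma \ge t^{n-1}_\sigma$ and $D\ge 0$, we have $G_n \ge G_{n-1}$, hence $\langle G_{n-1} w^n, w^n\rangle_\Gamma \le \langle G_n w^n, w^n\rangle_\Gamma$, which goes the right way to telescope. Summing the inequality from step $0$ up to step $n$ and using monotonicity of the weights gives $\langle G_n w^{n+1}, w^{n+1}\rangle_\Gamma \le \langle G_0 w^0, w^0\rangle_\Gamma = \delta\,\|w^0\|_\Gamma^2$ (since $t^0_\sigma = \sigma\tau$, but more simply one bounds below by $G_0 \ge \delta I$ anyway). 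Finally, using $G_n \ge \delta I$ on the left, $\delta\,\|w^{n+1}\|_\Gamma^2 \le \langle G_n w^{n+1}, w^{n+1}\rangle_\Gamma \le \delta\,\|w^0\|_\Gamma^2$, which yields \eqref{25}. The main obstacle is the $n$-dependence of the norm induced by $G_n$; everything else is the routine $\sigma\ge\tfrac12$ energy argument, and the monotonicity $t^n_\sigma$ increasing is exactly what rescues the telescoping.
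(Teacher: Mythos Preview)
Your energy argument derails precisely at the ``remaining subtlety'' you flag. From the single-step inequality
\[
\langle G_n w^{n+1}, w^{n+1}\rangle_\Gamma \le \langle G_n w^n, w^n\rangle_\Gamma,
\qquad G_n = t^n_\sigma D + \delta I,
\]
you want to telescope back to $n=0$. To do that you would need $\langle G_n w^n, w^n\rangle_\Gamma \le \langle G_{n-1} w^n, w^n\rangle_\Gamma$, so that you can chain with the previous step's inequality $\langle G_{n-1} w^{n}, w^{n}\rangle_\Gamma \le \langle G_{n-1} w^{n-1}, w^{n-1}\rangle_\Gamma$. But $t^n_\sigma > t^{n-1}_\sigma$ and $D\ge 0$ give $G_n \ge G_{n-1}$, which yields the \emph{opposite} inequality $\langle G_n w^n, w^n\rangle_\Gamma \ge \langle G_{n-1} w^n, w^n\rangle_\Gamma$. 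So the monotonicity goes the wrong way, the chain breaks, and you cannot reach $\langle G_0 w^0, w^0\rangle_\Gamma$. Moreover, even if you could, $\langle G_0 w^0, w^0\rangle_\Gamma = \sigma\tau\,\langle D w^0, w^0\rangle_\Gamma + \delta\,\|w^0\|_\Gamma^2$, which is strictly larger than $\delta\,\|w^0\|_\Gamma^2$ in general, so your final line ``$\le \delta\,\|w^0\|_\Gamma^2$'' is also unjustified.

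The paper's proof sidesteps this $n$-dependent weight entirely by a different choice of test function. It first rewrites (\ref{23}) as
\[
\delta\,\frac{w^{n+1}-w^n}{\tau} + D\Bigl(\alpha w^n_\sigma + t^n_\sigma\,\frac{w^{n+1}-w^n}{\tau}\Bigr)=0,
\]
and then tests not against $w^n_\sigma$ but against the full expression $z:=\alpha w^n_\sigma + t^n_\sigma\,(w^{n+1}-w^n)/\tau$. This makes the $D$-term a perfect square $\langle Dz,z\rangle_\Gamma\ge 0$, so the remaining inequality $\bigl(\tfrac{w^{n+1}-w^n}{\tau},\,z\bigr)_\Gamma\le 0$ involves only the identity operator. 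Expanding $z$ and dropping the nonnegative term $t^n_\sigma\|\tfrac{w^{n+1}-w^n}{\tau}\|_\Gamma^2$ yields $\bigl(\tfrac{w^{n+1}-w^n}{\tau},\,w^n_\sigma\bigr)_\Gamma\le 0$, and now your own identity for $w^n_\sigma$ with $\sigma\ge\tfrac12$ gives $\|w^{n+1}\|_\Gamma\le\|w^n\|_\Gamma$ directly in the $L^2(\Gamma)$-norm, with no weight to track.
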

\begin{proof}
To prove this statement we first rewrite equation (\ref{23}) in the form:
\[
 \delta \frac{ w^{n+1} - w^{n}}{\tau } + D \left( \alpha w_\sigma^n 
      + t_\sigma^n\frac{ w^{n+1} - w^{n}}{\tau } \right ) = 0 . 
\] 
Multiplying this it by
$
 \alpha w_\sigma^n + t_\sigma^n  \left( w^{n+1} - w^{n} \right)/\tau,
$
taking a discrete $L^2(\Gamma)$-inner product in view of the positivity of $D$ we get 
\[
 \left ( \frac{ w^{n+1} - w^{n}}{\tau }, w_\sigma^n \right )_\Gamma  \leq 0. 
\] 
Further, since 
$\displaystyle
 w_\sigma^n = \tau \Big ( \sigma - \frac{1}{2} \Big ) \frac{w^{n+1} - w^{n}}{\tau }   +
 \frac{1}{2} (w^{n+1} + w^{n}),
$ 
then for  $\sigma \geq 0.5$ we easily get \eqref{25}.
\end{proof} 

Lemma \ref{lem:stab}  and the approximation properties of the 
finite difference scheme (\ref{23}), (\ref{24}) ensures that for sufficiently smooth $w(t)$
its approximate solution converges to $w(t)$ with second order 
for $\sigma=0.5$ 
and with first  order for all other  values of $\sigma$.
The smoothness of $w(t)$ depends on the smoothness of the data $g$ and the 
properties of the pseudo-parabolic problem.
The case of non-smooth solutions (or non-smooth data) is a subject of a separate  study.

\section{Numerical experiments} 

Here we present results of the numerical solution of a model problem in two spatial dimensions,
where the computational domain is a quarter of the circle of radius 1.
We solve the problem for the elliptic problem (\ref{eq:weak}) for
$ 
 k({x}) = 1,  c({ x}) = c_0 = const > 0$,
$  x \in \Omega$,  and $ g( x) = 1$ for $ x \in \Gamma . $ 
In the numerical experiments for testing the algorithm based on solving the Cauchy problem
we choose $\sigma =0.5$. For smooth solutions the scheme has second order approximation in time.

\begin{figure}[h!]
\begin{minipage}{0.4\linewidth}
  \begin{center}
    \includegraphics[width=1.\linewidth] {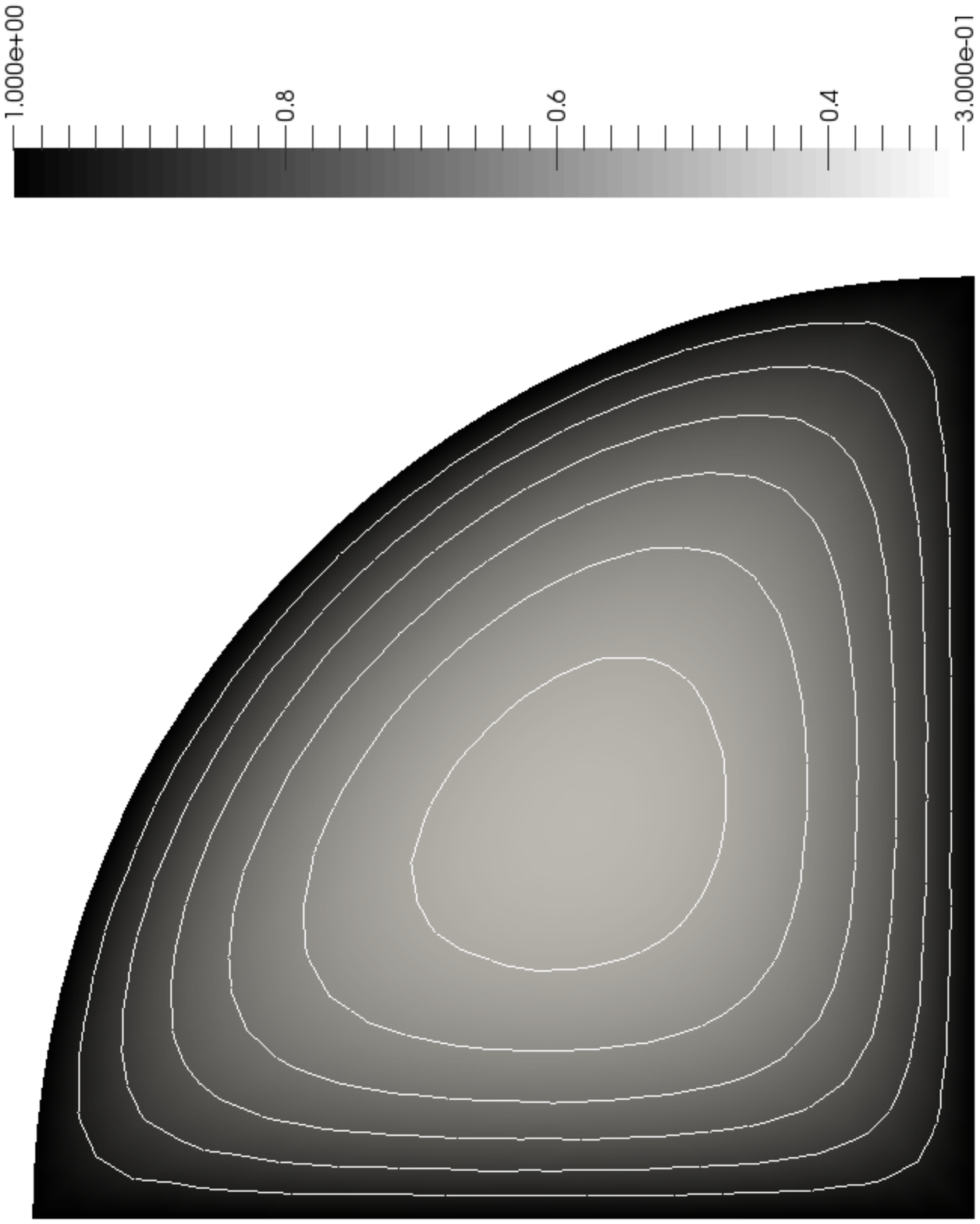}
  \end{center}
\end{minipage}\hfill
\begin{minipage}{0.4\linewidth}
  \begin{center}
    \includegraphics[width=1.\linewidth] {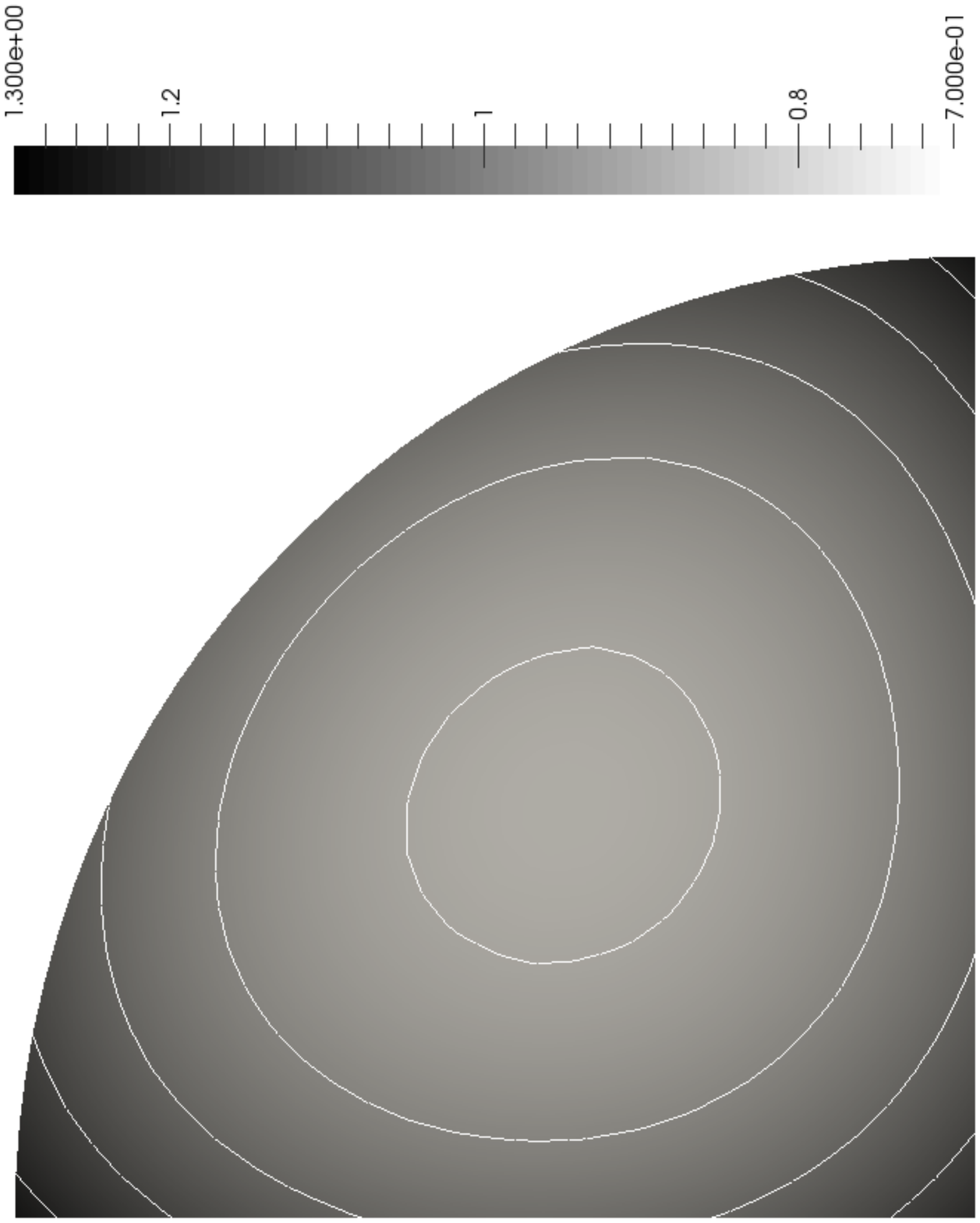}
  \end{center}
\end{minipage}
	\caption{Approximate solutions for $c_0=5$ in 
	two limiting cases: (left) $\alpha = 0$ (Dirichlet data), $\min w( x) = 0.38688$, 
	 $\max w( x) = 1$ (achieved on $\Gamma$)
         and (right) $\alpha = 1$ (Neumann data), $\min w( x) = 0.7741$,  $\max w( x) = 1.2417$.}
	\label{fig.1}
\end{figure}
\begin{figure}[h!]
\begin{minipage}{0.33\linewidth}
  \begin{center}
    \hspace{20mm}
    \includegraphics[width=.9\linewidth] {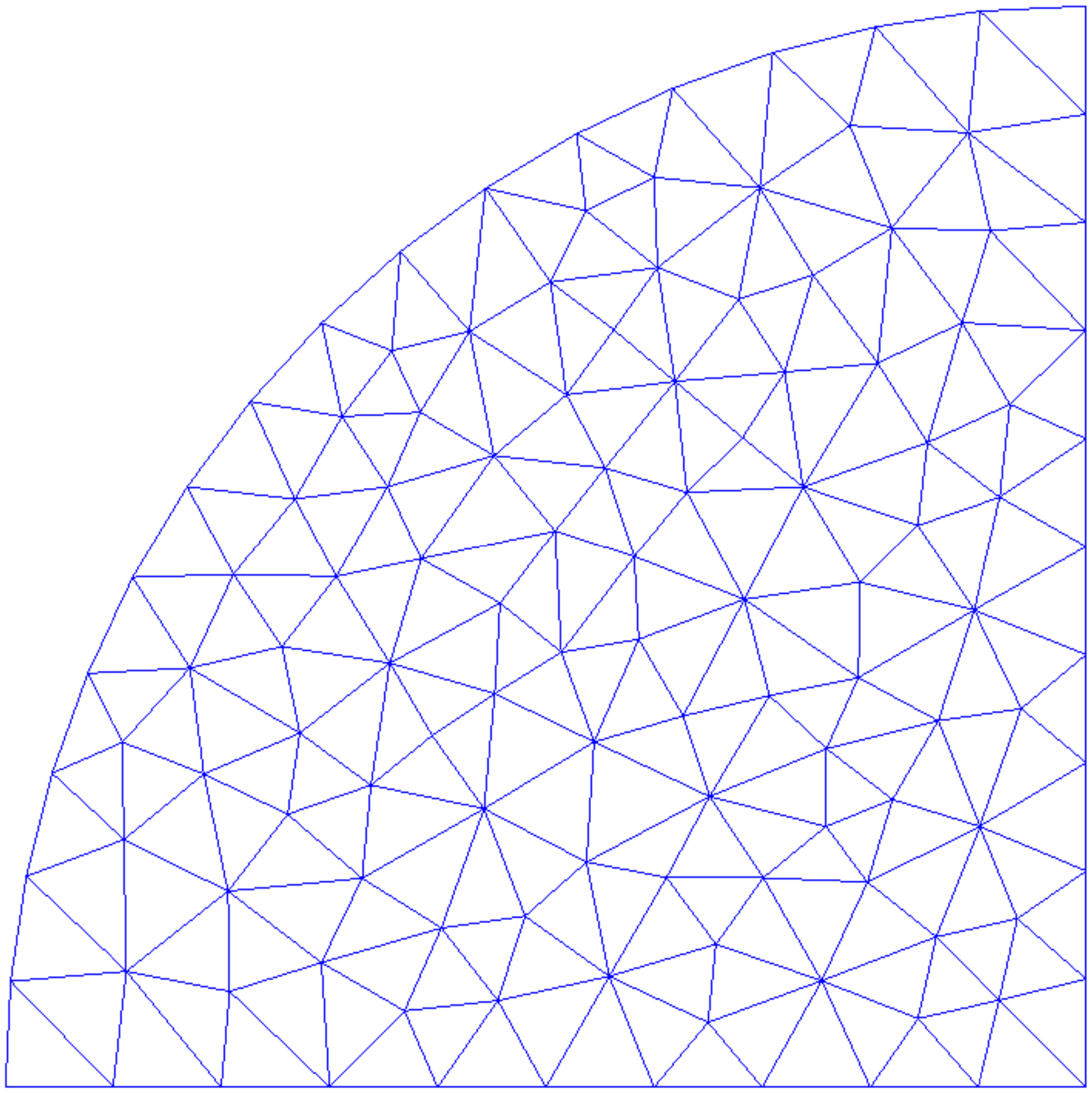}
  \end{center}
\end{minipage}\hfill
\begin{minipage}{0.33\linewidth}
  \begin{center}
    \includegraphics[width=.9\linewidth] {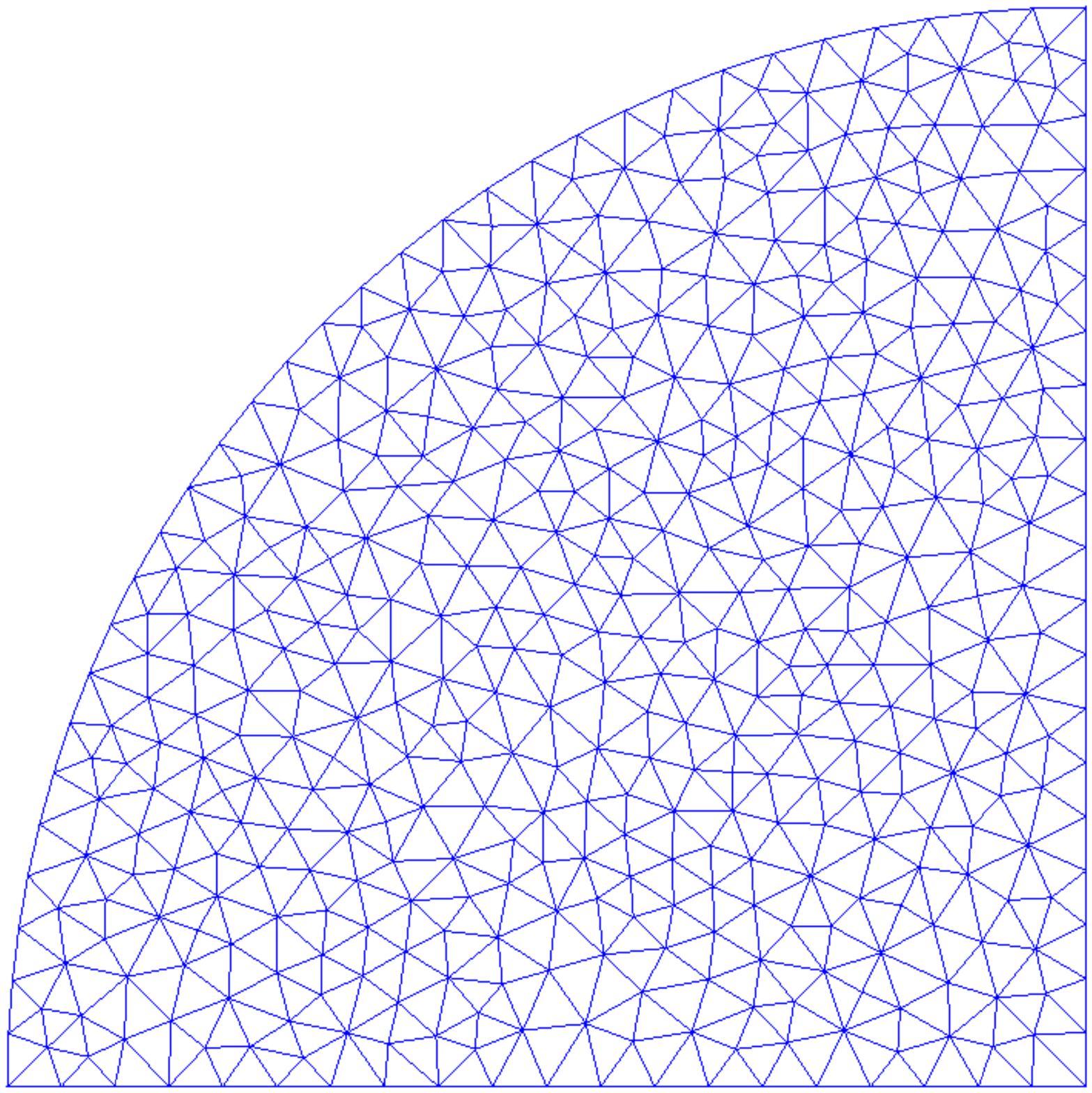}
  \end{center}
\end{minipage}\hfill
\begin{minipage}{0.33\linewidth}
  \begin{center}
    \includegraphics[width=.9\linewidth] {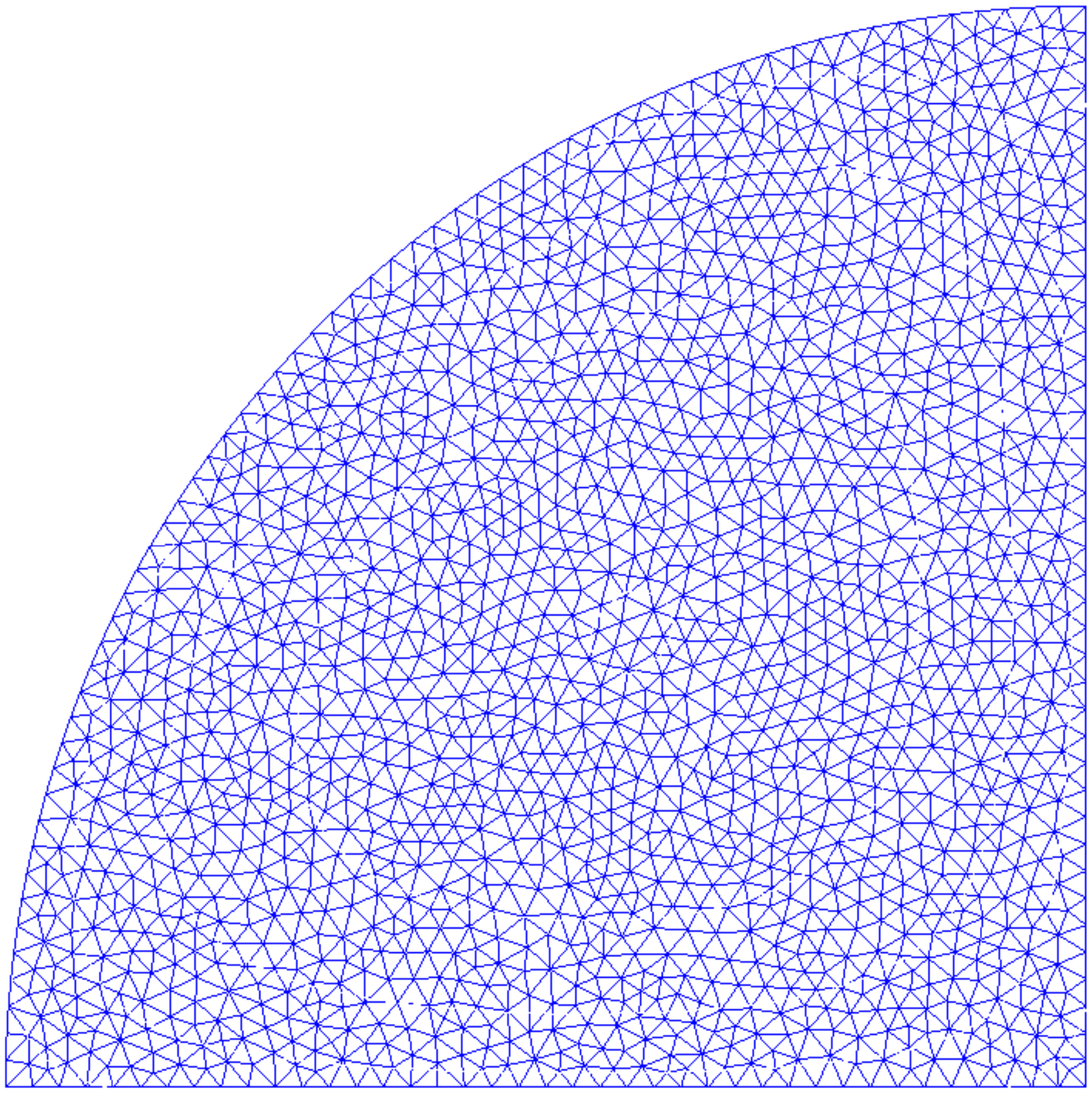}
  \end{center}
\end{minipage}\hfill
\\[1ex]
\begin{minipage}{0.33\linewidth}
  \begin{center}
    \includegraphics[width=.9\linewidth] {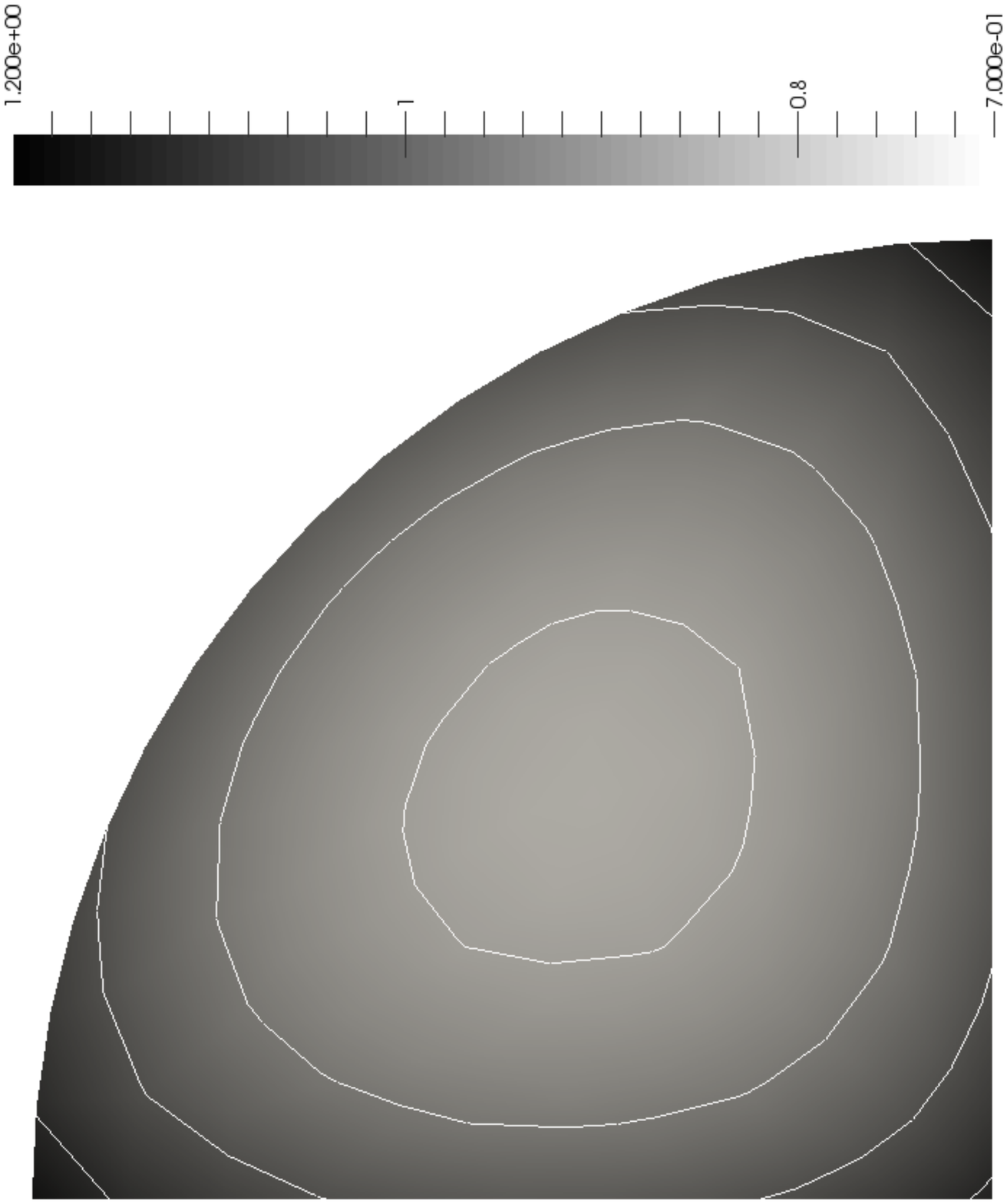}
  \end{center}
\end{minipage}\hfill
\begin{minipage}{0.33\linewidth}
  \begin{center}
    \includegraphics[width=.9\linewidth] {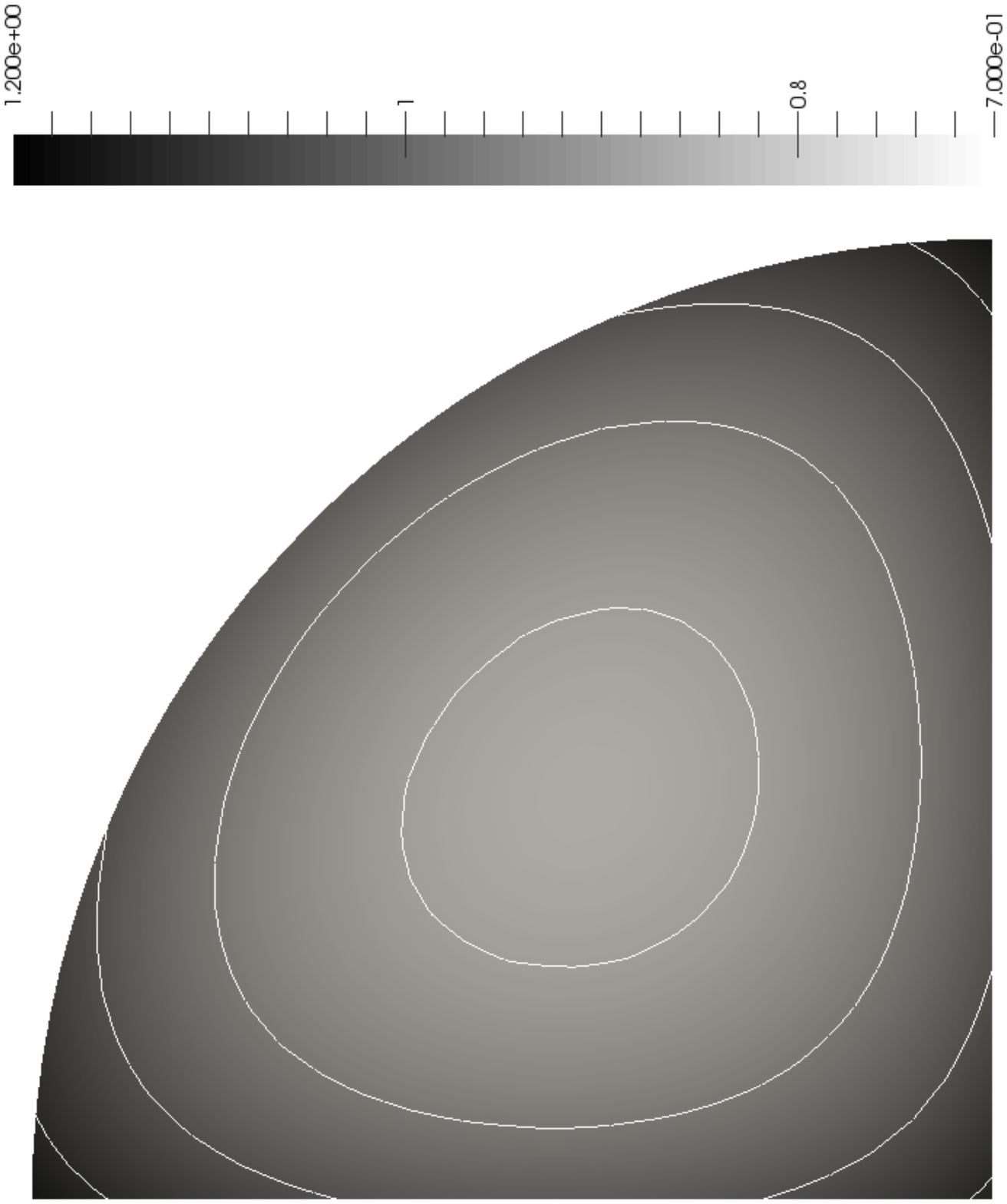}
  \end{center}
\end{minipage}\hfill
\begin{minipage}{0.33\linewidth}
  \begin{center}
    \includegraphics[width=.9\linewidth] {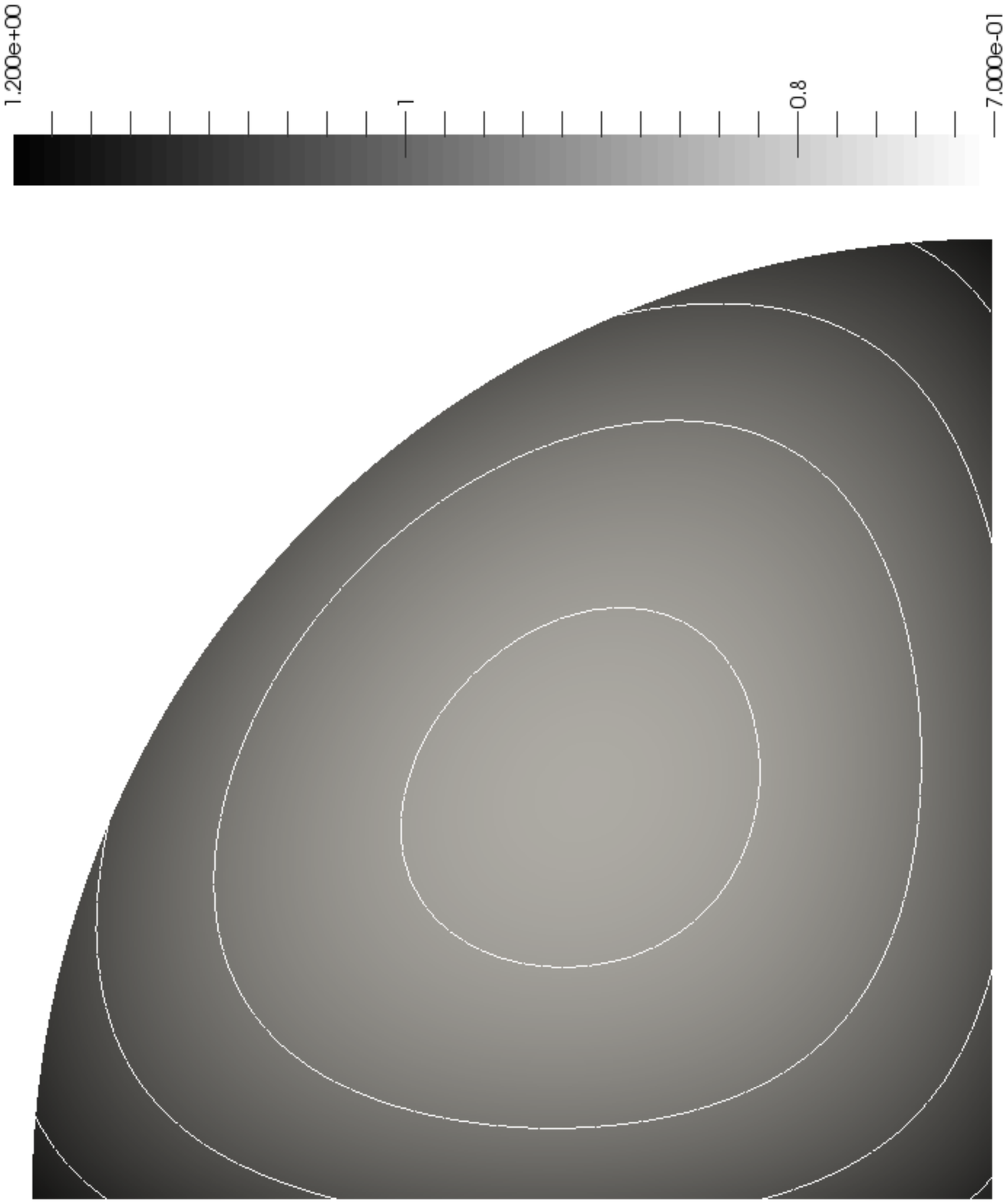}
  \end{center}
\end{minipage}\hfill
	\caption{
	Approximate solutions for $\alpha = 0.5$ on three meshes: 
	Left: grid 
	with 123 vertexes and 208 triangles; $\min w( x) = 0.7669, \ \max w( x) = 1.157$;
	Middle: grid 
	with 461 vertexes and 848 triangles; $\min w( x) = 0.7668, \ \max w( x) = 1.151$;
	Right: grid with 1731 vertexes and 3317 triangles;	$\min w( x) = 0.7668, \ \max w( x) = 1.151$.}
	\label{fig.combined}
\end{figure}

On Fig.~\ref{fig.1} we show the numerical solution level curves $u(x)=0.9, 0.8, 0.7, \dots$ 
for two limiting cases of $\alpha$, namely, the standard elliptic problem 
with Dirichlet data, $\alpha=0$, and with Neumann data, $\alpha = 1$.
These two examples are given for comparison with the cases of fractional boundary conditions.

Next, on Fig.~\ref{fig.combined} 
we show the computed solution obtained on three different grids for $\alpha = 0.5$ and $c_0 = 5$. 
One can observe a rather weak dependence
of the results on the grid size. Note, that in this case the solution  is smooth.
The solutions of the problem with $\alpha = 0.25$ and $\alpha = 0.75$ are shown on Fig.~\ref{fig.alpha}.
The impact of the coefficient $c_0$ can be observed  on Fig.~\ref{fig.c0}, where we show 
the approximate solution for  $c_0 = 1$ and $c_0 = 25$ that is
obtained on a grid with 461 vertexes,  a medium size grid.

\begin{figure}[h!]
\begin{minipage}{0.38\linewidth}
  \begin{center}
    \includegraphics[width=.9\linewidth] {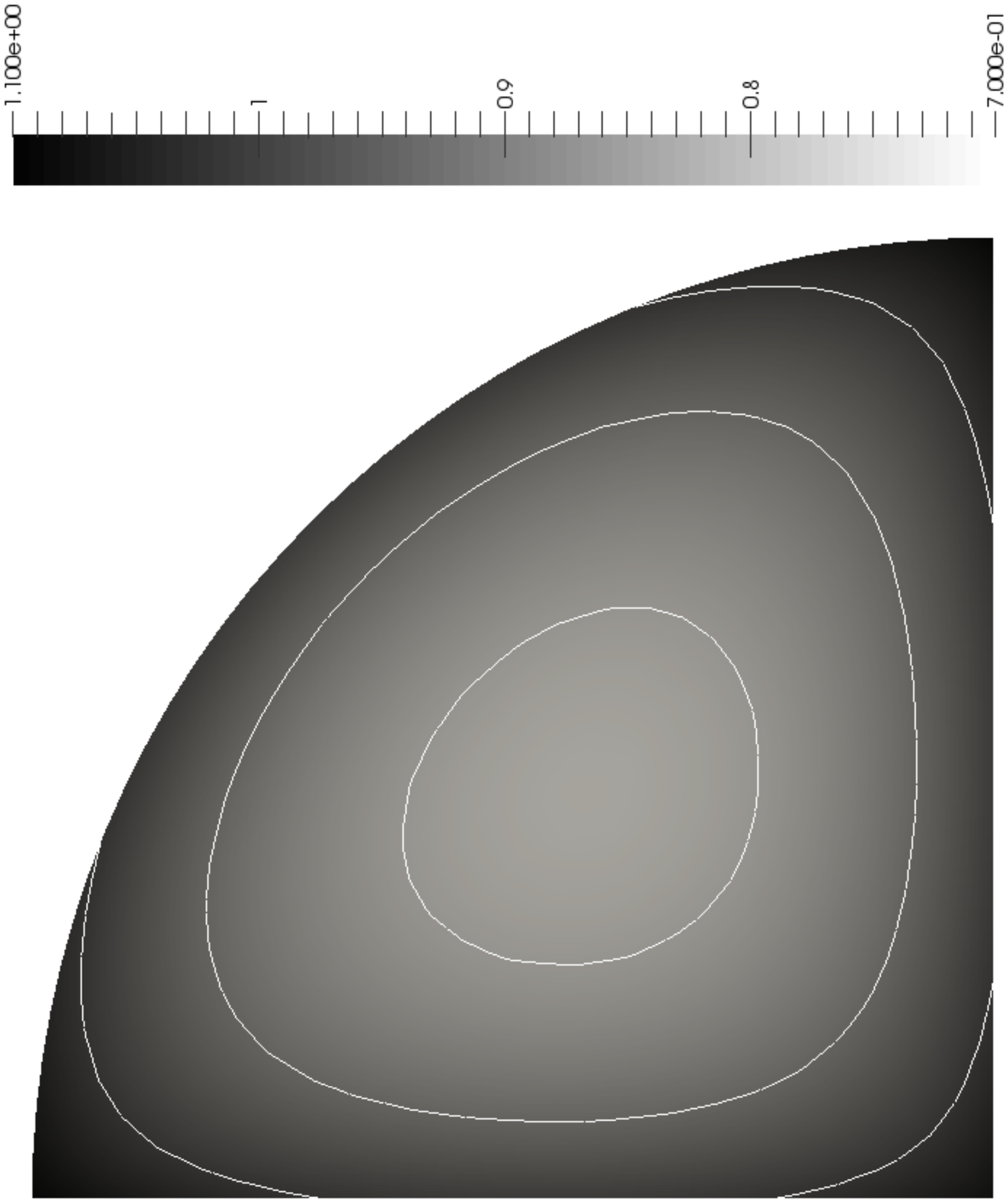}
  \end{center}
\end{minipage}\hfill
\begin{minipage}{0.38\linewidth}
  \begin{center}
    \includegraphics[width=.9\linewidth] {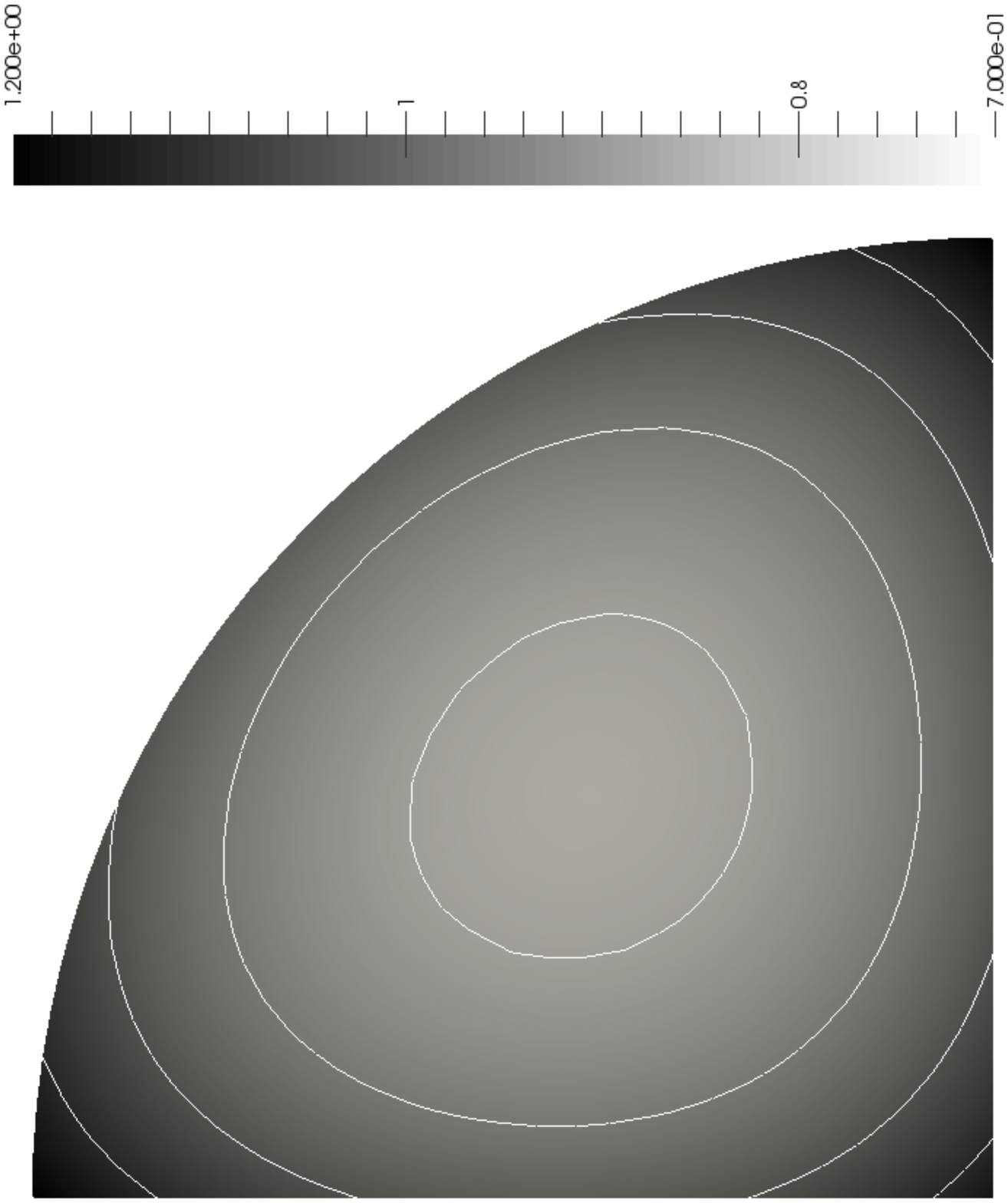}
  \end{center}
\end{minipage}
	\caption{The solution for different $\alpha$: Left:  $\alpha  = 0.25$, $\min w( x) = 0.767$, $  \max w( x) = 1.087$ 
         and  Right: $\alpha  = 0.75$, $\min w(x) = 0.769$, $\max w( x) =  1.201$.}
	\label{fig.alpha}
\end{figure}

\begin{figure}[h!]
\begin{minipage}{0.38\linewidth}
  \begin{center}
    \includegraphics[width=.9\linewidth] {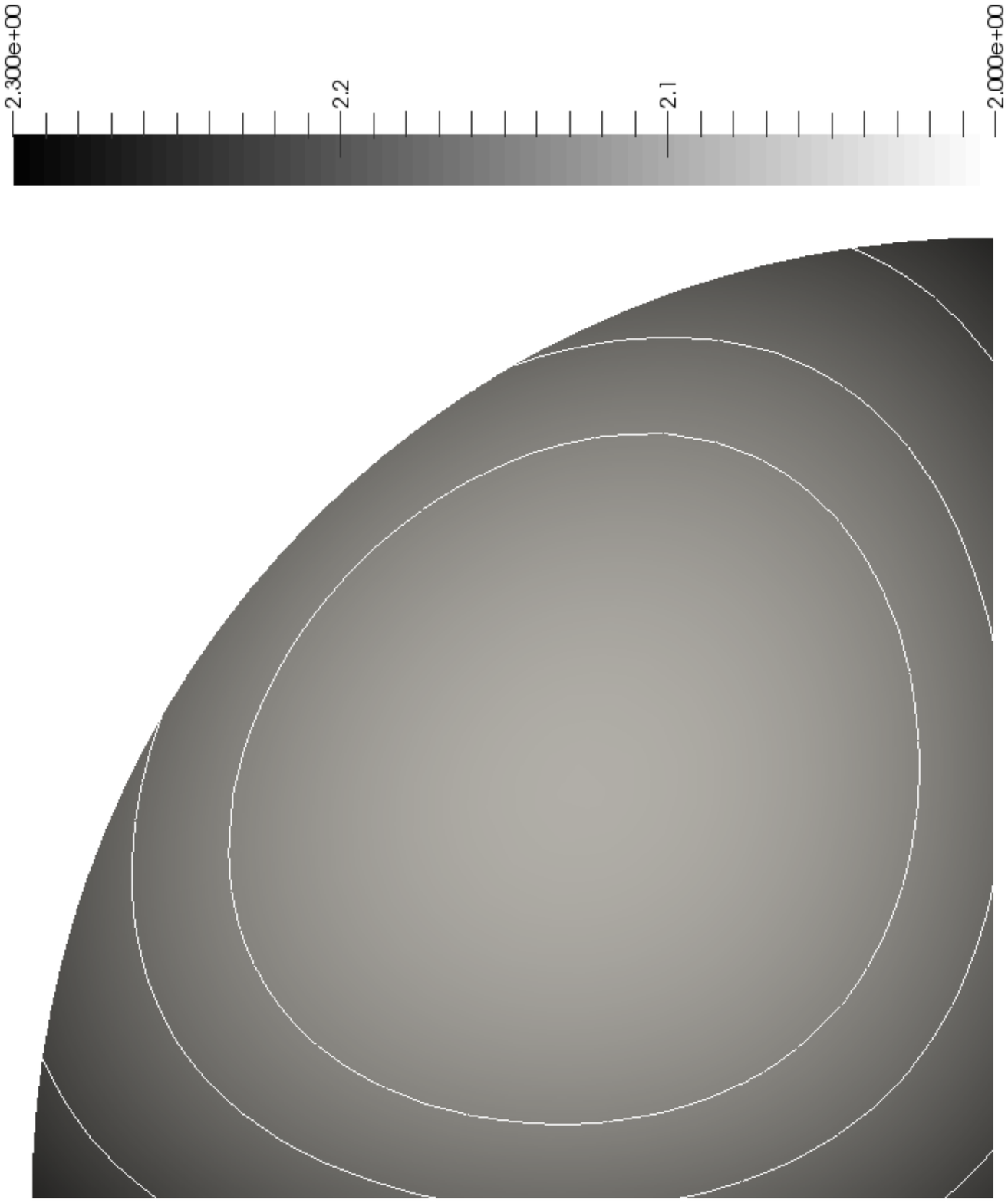}
  \end{center}
\end{minipage}\hfill
\begin{minipage}{0.38\linewidth}
  \begin{center}
    \includegraphics[width=.9\linewidth] {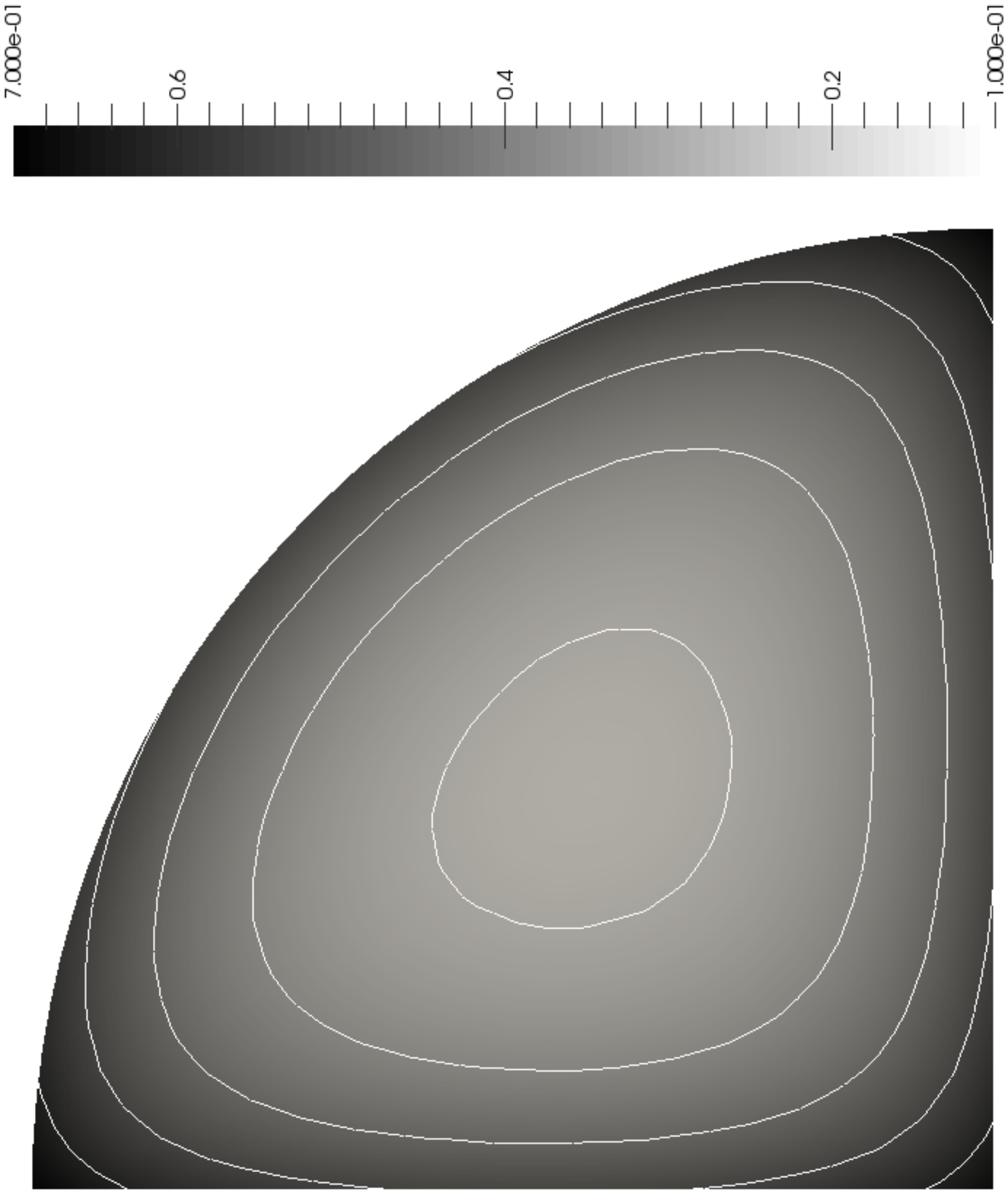}
  \end{center}
\end{minipage}
	\caption{The solution for different $c_0$: Left: $c_0 = 1$, $\min w( x) = 2.034$, $\max w( x) = 2.246$;
         Right: $c_0 = 25$ (right), $\min w( x) = 0.174$, $ \max w( x) =  0.690$.}
	\label{fig.c0}
\end{figure}

The most important issue concerning the efficiency and the accuracy of a numerical method for 
solving a boundary value problems for fractional power of 
elliptic operators or elliptic equations with fractional boundary conditions
is the impact of $M$, the number of nodes in the quadrature formula \eqref{12} or the
number of time steps in \eqref{23}.
Below we report the relative error  in $L^\infty(\Omega)$ and $L^2 (\Omega)$-norms:
\[
 e_\infty  = \max_{ x \in \Omega} |w_M(x) - w(x) |/\max_{x \in \Omega}|w(x)|,
 \quad   e_2 = \|w_M( x) -w( x)\|/\|w\|.
\] 
Here $w_M( x)$ is the numerical solution, and $w( x)$ is the reference (practically exact)
solution obtained for large $M$, 
shown in Fig.~\ref{fig.combined}.

The errors, $e_\infty $ and $e_2$, of the numerical solution of the problem for $c_0 = 5$,
$\alpha  = 0.25, 0.5, 0.75$ and various $M$
for both methods are shown in Table~\ref{tab-1:combined}. 
In these experiments for Method II we 
take $\delta=0.9$ and $\widetilde{\lambda}_1 \approx 0.949314$. 

First we note that 
to achieve acceptable accuracy with Method I, we need to use quadrature formulas with a fairly large number of nodes
($M \sim 100$). The highest convergence rates are observed for $\alpha  = 0.5$. The convergence decreases drastically
 when $\alpha $ gets close to $  0$ or $  1$. However, this numerical scheme has asymptotic exponential convergence,
 see, \cite{bonito2015numerical}. Analyzing  the numerical data  
 shown on Table \ref{tab-1:combined} we see that: (1) by doubling the quadrature points from $10$ to $20$
 the error reduces by a factor of $2$; (2) by doubling the quadrature points from $80$ to $160$
 the error reduces by  factors of $4 $ to $ 40$ for different $\alpha$.
 Also, for $\alpha=0.5$ by doubling the quadrature points from $40$ to $80$
 the error $e_2$ is reduced by a factor of $13$. The same conclusions can be made after analyzing the 
 numerical results of Table \ref{tab-2:combined} as well. This means that an exponential asymptotic 
 convergence rate begins to show for $M$ large enough depending on fractional power $\alpha$.
 
 The numerical results for Method II show, as predicted by the theory,
 almost second order convergence rate with respect to $\tau$.
 Note that relatively good accuracy is achieved  
even for small number of time steps $N$ ($N=M$). 
From these numerical experiments (performed on smooth solutions) we see that the 
Method II shows  better accuracy for relatively large time-step $\tau$, which
translates into fewer computations. 
 
The errors, $e_\infty $ and $e_2$,  of the numerical solution for problems with $\alpha  = 0.5$  and different 
values of the  coefficient $c_0$  are given in Table~\ref{tab-2:combined}. 
For Method II we 
take $\delta=0.2$ and $\widetilde{\lambda}_1 \approx 0.212867$ at $c_0=1$ and
$\delta=3$ and $\widetilde{\lambda}_1 \approx 3.170554$ at $c_0=25$.
From the numerical experiments we see that 
these methods are fairly insensitive to the variation of $c_0$.

\small
\begin{table}[!h]
\caption{Numerical solution error for $c_0=5$ and $\alpha=\frac14,\frac12,\frac34$}
\begin{center}
\begin{tabular}{|c|c ||ccc||ccc|} \cline{3-8} 
\multicolumn{2}{c ||}{~} & \multicolumn{3}{| c ||}{Method I} & \multicolumn{3}{|c|}{Method II} \\ \hline
$M$  & $e$ $\backslash$  $\alpha $  & 0.25 & 0.5 & 0.75  & 0.25 & 0.5 & 0.75   \\ \hline
5    & $e_\infty$       &  2.5009e-01  &  1.0492e-01 &  2.7720e-01  &  1.4875e-03  &  1.2809e-03 &  6.3801e-04 \\
     & $e_2$  	        &  2.6929e-01  &  1.0819e-01 &  2.7201e-01  &  2.1016e-04  &  2.3152e-04 &  1.4033e-04 \\   \hline
10   & $e_\infty$       &  1.5967e-01  &  4.4602e-02 &  1.7838e-01  &  5.3950e-04  &  4.2429e-04 &  1.9596e-04 \\
     & $e_2$  		&  1.7271e-01  &  4.6029e-02 &  1.7458e-01  &  5.6560e-05  &  6.1306e-05 &  3.6738e-05 \\   \hline
20   & $e_\infty$       &  8.4043e-02  &  1.2636e-02 &  9.4258e-02  &  1.8701e-04  &  1.3399e-04 &  5.7360e-05 \\
     & $e_2$  		&  9.1097e-02  &  1.3043e-02 &  9.2119e-02  &  1.4848e-05  &  1.5628e-05 &  9.2899e-06 \\   \hline
40   & $e_\infty$       &  3.3725e-02  &  2.0501e-03 &  3.7872e-02  &  6.1051e-05  &  4.0353e-05 &  1.6150e-05 \\
     & $e_2$  		&  3.6579e-02  &  2.1162e-03 &  3.6994e-02  &  4.0022e-06  &  3.9635e-06 &  2.3232e-06 \\   \hline
80   & $e_\infty$       &  9.2023e-03  &  1.5283e-04 &  1.0336e-02  &  1.8221e-05  &  1.1398e-05 &  4.3515e-06 \\
     & $e_2$  		&  9.9821e-03  &  1.5781e-04 &  1.0096e-02  &  1.0897e-06  &  1.0015e-06 &  5.7474e-07 \\   \hline
160  & $e_\infty$       &  1.4553e-03  &  3.7728e-06 &  1.6347e-03  &  4.9096e-06  &  2.9878e-06 &  1.1157e-06 \\
     & $e_2$  		&  1.5787e-03  &  3.9520e-06 &  1.5967e-03  &  2.8073e-07  &  2.4428e-07 &  1.3636e-07 \\   \hline
\end{tabular}
\end{center}
\label{tab-1:combined}
\end{table}
\normalsize
 
\small
\begin{table}[!h]
\caption{Numerical solution error for $\alpha  = 0.5$  and $c_0=1,5,25$}
\begin{center}
\begin{tabular}{|c|c ||ccc||ccc|} \cline{3-8} 
\multicolumn{2}{c ||}{~} & \multicolumn{3}{| c ||}{Method I} & \multicolumn{3}{|c|}{Method II} \\ \hline
$M$  & $e$ $\backslash$ $c_0 $ & 1        & 5            & 25            & 1             & 5            & 25   \\ \hline
5    & $e_\infty$       &  1.4067e-01  &  1.0492e-01 &  1.1232e-01  & 1.2847e-03  &  1.2809e-03   & 9.4560e-04 \\
     & $e_2$  	        &  1.4056e-01  &  1.0819e-01 &  1.3005e-01  & 3.1276e-04  &  2.3152e-04   & 1.2364e-04 \\   \hline
10   & $e_\infty$       &  6.0161e-02  &  4.4602e-02 &  4.7706e-02  & 5.2113e-04  &  4.2429e-04   & 3.0310e-04 \\
     & $e_2$  		&  6.0112e-02  &  4.6029e-02 &  5.5530e-02  & 1.0988e-04  &  6.1306e-05   & 3.3082e-05 \\   \hline
20   & $e_\infty$       &  1.7066e-02  &  1.2636e-02 &  1.3512e-02  & 1.8893e-04  &  1.3399e-04   & 8.9985e-05 \\
     & $e_2$  		&  1.7052e-02  &  1.3043e-02 &  1.5747e-02  & 3.2926e-05  &  1.5628e-05   & 8.8981e-06 \\   \hline
40   & $e_\infty$       &  2.7691e-03  &  2.0501e-03 &  2.1923e-03  & 6.2310e-05  &  4.0353e-05   & 2.4683e-05 \\
     & $e_2$  		&  2.7669e-03  &  2.1162e-03 &  2.5551e-03  & 8.8335e-06  &  3.9635e-06   & 2.3307e-06 \\   \hline
80   & $e_\infty$       &  2.0637e-04  &  1.5283e-04 &  1.6345e-04  & 1.9396e-05  &  1.1398e-05   & 6.3911e-06 \\
     & $e_2$  		&  2.0634e-04  &  1.5781e-04 &  1.9053e-04  & 2.2445e-06  &  1.0015e-06   & 5.7694e-07 \\   \hline
160  & $e_\infty$       &  5.1206e-06  &  3.7728e-06 &  4.0639e-06  & 5.6936e-06  &  2.9878e-06   & 1.5981e-06 \\
     & $e_2$  		&  5.1713e-06  &  3.9520e-06 &  4.7688e-06  & 5.5463e-07  &  2.4428e-07   & 1.2614e-07 \\   \hline
\end{tabular}
\end{center}
\label{tab-2:combined}
\end{table}

\normalsize
\smallskip
\section*{Acknowledgements}
The authors thank their institution for the support while working on this project.
The work of R. Lazarov was  partially supported also by grant NSF-DMS \# 1620318  
while the work of P. Vabishchevich was supported by the Ministry of 
Education and Science of the Russian Federation 
(Agreement \# 02.a03.21.0008).




\end{document}